\documentclass[11pt,reqno]{amsart}

\usepackage[T1]{fontenc}
\usepackage{lmodern}
\usepackage{amsmath,amssymb,amsthm,mathtools,comment}
\usepackage{microtype}
\usepackage{enumitem}
\usepackage[colorlinks=true,linkcolor=blue,citecolor=blue,urlcolor=blue]{hyperref}

\newtheorem{theorem}{Theorem}[section]
\newtheorem{proposition}[theorem]{Proposition}
\newtheorem{lemma}[theorem]{Lemma}
\newtheorem{corollary}[theorem]{Corollary}
\theoremstyle{remark}
\newtheorem{remark}[theorem]{Remark}
\newtheorem{notation}[theorem]{Notation}

\newcommand{\PP}{\mathbb P}
\newcommand{\ZZ}{\mathbb Z}

\newcommand{\OO}{\mathcal{O}}

\newcommand{\Pic}{\operatorname{Pic}}
\newcommand{\Bl}{\operatorname{Bl}}
\newcommand{\Nef}{\operatorname{Nef}}

\newcommand{\Proj}{\operatorname{Proj}}

\newcommand{\Sbar}{\overline{S}}
\newcommand{\Et}{\widetilde{E}}

\title{A Sarkisov Involution from a Rational Cubic Curve on a Quartic Double Solid}

\author{Joseph Cutrone}
\address{Department of Mathematics, Johns Hopkins University, 3400 N. Charles St, Baltimore, MD 21218}
\email{jcutron2@jhu.edu}

\author{Nicholas Marshburn}
\address{Department of Mathematics, Johns Hopkins University, 3400 N. Charles St, Baltimore, MD 21218}
\email{nmarshb1@jhu.edu}

\date{}
\subjclass[2020]{14J45 (Primary); 14E30, 14E05, 14J28 (Secondary)}
\keywords{Fano threefold, weak Fano threefold, Sarkisov link, del Pezzo threefold, K3 surface, extremal contraction}

\begin{document}

\begin{abstract}
Building on an earlier classification of smooth weak Fano threefolds of Picard number two with small anticanonical morphism, we prove that Case~28 of the table of E1-E1 links from that classification is geometrically realizable. We construct the link by blowing up a smooth rational cubic curve on a degree-two del Pezzo threefold $V_2$, realized as a quartic double solid. A lattice calculation on an anticanonical K3 surface shows that the anticanonical morphism of the blow-up is small, and the covering involution of the anticanonical double cover realizes the flop. The resulting Sarkisov link is a birational involution with the numerical invariants of E1-E1 Case~28.
\end{abstract}

\maketitle

\section{Introduction}

In \cite{CM25}, the authors provided an update to the classification of
smooth weak Fano threefolds of Picard number two with small anticanonical
morphism. That paper reduced the remaining open cases on the E1-E1
table to Cases 28, 59, 61, and 80. The purpose of the present paper is
to prove the geometric existence of Case 28.

The numerical classification of these Sarkisov links was initiated in
\cite{CM13}, building on the work of \cite{JPR07}, and the geometric
existence of many of the remaining cases was subsequently settled in
\cite{ACM17, CLM19, CM25}; related constructions of Sarkisov links by blowing up curves on Fano threefolds appear in \cite{BL12,BL15}. Case 28 remained open. Its numerical invariants are:
\begin{equation}\label{eq:case28data}
 (-K_X)^3=2,\qquad (-K_Y)^3=(-K_{Y^+})^3=16,
\end{equation}
\begin{equation}\label{eq:curve-data}
 (r,d,g)=(r^+,d^+,g^+)=(2,3,0),
\end{equation}
with
\begin{equation}\label{eq:table-defect}
 \alpha=8,\qquad \beta=-1,\qquad \frac{e}{r^3}=69.
\end{equation}

\begin{notation}
Recall the conventions of \cite{CM13,CM25} for a Sarkisov link of type
$E_1$-$E_1$: $Y$ is a Fano threefold of Picard number one and Fano index
$r$ (so $-K_Y\sim rH$ for the primitive ample generator $H$), and $C\subset Y$
is a smooth curve of degree $d=H.C$ and genus $g$ blown up to produce
$X$; the data $(r^+,d^+,g^+)$ on the other side of the flop
$\chi\colon X\dashrightarrow X^+$ are defined symmetrically. Let $E^+\subset
X^+$ be the exceptional divisor of the second $E_1$-contraction, and let
$D\subset X$ be its strict transform under $\chi^{-1}$. Write
\[
D=\alpha(-K_X)+\beta E.
\]
If $\widetilde E\subset X^+$ denotes the strict transform of $E$ under
the flop, the defect of the flop is
\[
e=E^3-\widetilde E^3,
\]
and the normalized defect appearing in the table is $e/r^3$. Throughout,
all varieties are defined over the field $\mathbb{C}$.
\end{notation}

Thus Case 28 should arise from the blow-up of a smooth rational cubic curve on a degree-two del Pezzo threefold $V_2$, with the same numerical
geometry on the other side of the flop.

As noted in \cite{CM25}, Case 28 is different from the other remaining open cases because the numerical link begins with the degree-two del Pezzo threefold $V_2$. The methods of \cite{ACM17} do not apply directly to this case, since $V_2$ does not admit the projective complete intersection model used there. We instead realize $V_2$ as a double
cover
\[
 V_2\longrightarrow \PP^3
\]
branched over a smooth quartic surface.\footnote{The authors would like to thank V.~Cheltsov for suggesting the use of the quartic double solid model during discussions surrounding the SAGA$^2$ Conference at the University of French Polynesia in March 2026.}

To summarize, the construction begins with a twisted cubic $\Gamma\subset\PP^3$ contained in a smooth quartic K3 surface of Picard number two. We choose the branch quartic of the double cover so that the inverse image of $\Gamma$ splits into two rational cubic curves. Blowing up one of these curves gives the desired weak Fano threefold. A lattice computation on the strict transform of the K3 surface shows that the anticanonical morphism is small.

Our main result is the following:
\begin{theorem}\label{thm:main}
The numerical invariants listed in the E1-E1 table for Case 28 are geometrically realizable as a Sarkisov link of Fano threefolds.
\end{theorem}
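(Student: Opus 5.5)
The plan follows the construction sketched in the introduction. First I choose a smooth quartic K3 surface $S\subset\PP^3$ with $\Pic S=\ZZ h\oplus\ZZ\Gamma$, where $h^2=4$, $h.\Gamma=3$ and $\Gamma^2=-2$, and $\Gamma$ is a twisted cubic. Such a surface exists by Morrison/Nikulin, since this lattice is even of signature $(1,1)$ and embeds primitively in the K3 lattice; the discriminant is $-4\cdot2-9=-17$. Next I choose the branch quartic $B=\{f=0\}$ so that $\Gamma$ is totally tangent to $B$. Concretely, I take $f=q^2-g\cdot s$ with $s$ the equation of $S$, $q$ a quadric and $g$ a quartic form; then $f|_\Gamma=q|_\Gamma^2$ is a square. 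A Bertini argument should make $B$ smooth. The preimage of $\Gamma$ in $V_2=\{w^2=f\}$ then splits as $C\cup C'$ with $w=\pm q$, and each component is a smooth rational curve with $H.C=3$, where $H$ is the pullback of the hyperplane class. Let $X=\Bl_C V_2$. Then $-K_X=2H-E$, and $(-K_X)^3=8\cdot2-3\cdot4\cdot 3+\dots$; more precisely, the standard formula $(-K_X)^3=(-K_Y)^3-2(-K_Y.C)+2g-2=16-12-2=2$ gives \eqref{eq:case28data}.

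Second, I show $-K_X$ is nef and big with small anticanonical morphism. The preimage $\Sbar\subset V_2$ of $S$ lies in $|H|$? That does not contain $C$ properly, so instead I use a member of $|2H|$ containing $C$. The natural choice is the surface $\{w=q\}$, which is the graph over $\PP^3$ and so is isomorphic to the quartic... That is not quite right either: $\{w=q\}\cap V_2=\{q^2=f\}=\{gs=0\}$, which is reducible. So I take the component lying over $S$. It is a copy $S_1\cong S$ containing $C$ and cut out by $w-q$ together with $s$. This gives $S_1\in|H|$ through $C$ only if $S_1=\{s=0,\,w=q\}$ is a Cartier divisor in $|H|$, and it is not. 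Instead, I consider the divisor $\{s=0\}\in|2H|$ restricted to... I would sort out the correct anticanonical K3 surface $T\in|{-K_X}|=|2H-E|$ at this step. A good candidate is the strict transform of the preimage of a quartic in $|2H|$ vanishing on $C$ to order one, restricted to a lattice-polarized K3. On $T$ I compute $\Pic T$, which has rank at least 2, generated by $H|_T$ and $E|_T$. I then check that $L=(2H-E)|_T$ is nef with $L^2=2$, and that the only $L$-trivial curves are finitely many $(-2)$-curves. This uses the standard criterion that a class on a K3 is nef iff it meets all $(-2)$-classes nonnegatively; on a rank-two lattice this reduces to a Pell-type check. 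Since $-K_X$ restricted to a general anticanonical member is nef and $T$ moves in a base-point-free system, $-K_X$ is nef. Because no divisor is $-K_X$-trivial (a trivial divisor would cut $T$ in an $L$-trivial class of positive self-intersection), the morphism is small. This lattice step is the main obstacle. The difficulty is pinning down the correct K3 and excluding both classes $D$ with $D^2=-2$, $D.L=0$ and elliptic classes with $D.L=0$. I would first try the rank-two lattice $\begin{pmatrix}8&3\\3&-2\end{pmatrix}$ pulled back appropriately, and verify the inequalities directly.

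Third, the flop. Since $(-K_X)^3=2$ and $-K_X$ is base point free, the anticanonical model $\overline X$ is a double cover of $\PP^3$ branched along a singular quartic. Its covering involution $\tau$ lifts to a birational self-map $\chi=\tau$ of $X$ that is an isomorphism in codimension one, is not biregular, and exchanges the two small resolutions; this is the flop $X\dashrightarrow X^+\cong X$. Because of this symmetry, the other side is again the blow-up of a rational cubic on a $V_2$, which gives \eqref{eq:curve-data}. Finally, $\tau$ acts on $\Pic X$ fixing $-K_X$ and sending $E$ to $D=\alpha(-K_X)+\beta E$ with $\beta=-1$ (an involution). The value of $\alpha$ is determined by requiring $D$ to be the strict transform of the exceptional divisor of the other extremal ray, using $(-K_X)^2.D=(-K_X)^2.E$. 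The defect $e=E^3-\Et^3$ then follows from the intersection numbers. I would confirm that these match \eqref{eq:table-defect}, that is, $\alpha=8$ and $e/r^3=69$, which proves Theorem~\ref{thm:main}.
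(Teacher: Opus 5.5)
Your outline is the paper's: a K3 surface with $\Pic=\ZZ h\oplus\ZZ\Gamma$, a branch quartic that is a square along $\Gamma$, the split preimage $C\cup C'$, a lattice check on an anticanonical K3, and the covering involution as the flop. However, the step you flag as the main obstacle is left open, and it is open because of a slip in the construction. With $g$ a quartic form, $f=q^2-g\cdot s$ has degree $8$, so $\{w^2=f\}$ is not a $V_2$. For a quartic $f$ with $f|_\Gamma=q|_\Gamma^2$ you need $f-q^2\in H^0(\mathcal I_\Gamma(4))$. The paper takes $f=Q^2+G$ with $G$ the equation of the K3 surface itself, i.e.\ your $g$ is a nonzero constant. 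Then $\{w=q\}\cap Y$ is irreducible.

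This is the missing idea: $\Sbar=\{w-q=0\}$ lies in $|2H|=|-K_Y|$ (recall $w$ has weight $2$), projects isomorphically onto $S$, and contains $C$ as a Cartier divisor. So its strict transform $T\in|2H-E|=|-K_X|$ is again isomorphic to $S$, with $H|_T=h$ and $E|_T=\Gamma$. Hence $\Pic T$ is exactly $\begin{pmatrix}4&3\\3&-2\end{pmatrix}$, not $\begin{pmatrix}8&3\\3&-2\end{pmatrix}$; note $H^2\cdot(2H-E)=4$. Then $L=2h-\Gamma$ has $L^2=2$ and $L^\perp=\ZZ(8h-5\Gamma)$ with $(8h-5\Gamma)^2=-34$, so $L$ is ample and there are no $L$-trivial curves at all. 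Knowing that $\Pic T$ has rank exactly two is essential, since otherwise $L^\perp$ could contain $(-2)$-classes.

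Your parenthetical on smallness has the wrong sign. By Hodge index every nonzero class in $L^\perp$ has negative square. The actual contradiction for a contracted divisor $D\sim m(8H-5E)$ is that $D|_T$ would be a nonzero effective class with $L\cdot D|_T=(-K_X)^2\cdot D=0$.

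Two more inputs are assumed rather than proved:
\begin{itemize}
\item Base-point-freeness of $|-K_X|$. It holds because $\mathcal I_{C/Y}=(w-q,Q_1,Q_2,Q_3)$ with every generator in $H^0(Y,2H)$.
\item A $K_X$-trivial curve, without which there is no flop. The strict transform $R$ of $C'$ has $H\cdot R=3$ and $E\cdot R=\deg(C\cap C')=6$ (the zeros of $q|_\Gamma$), so $(-K_X)\cdot R=0$.
\end{itemize}

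In the flop step, ``$\beta=-1$ since $\tau$ is an involution'' does not follow. An involution of $\Pic X$ fixing $-K_X$ may be the identity, and then $\sigma=\psi^{-1}\circ\tau\circ\psi$ would not be the flop. You need $\sigma^*$ to act as $-1$ modulo $-K_X$. The paper gets this by restricting to $T$, which misses the exceptional locus of $\psi$ because $L$ is ample. There $\sigma|_T$ is the covering involution of $\varphi_{|L|}\colon T\to\PP^2$, which acts as $-1$ on $L^\perp$. A shortcut: for Weil divisors on $X'$ one has $D+\tau^*D=f^*f_*D\sim kM$ with $M=f^*\OO_{\PP^3}(1)$. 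Under $\Pic X\simeq\mathrm{Cl}(X')$ this gives $\sigma^*E+E\in\ZZ(-K_X)$.

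Granting $\sigma^*E=\alpha(-K_X)-E$, your determination of $\alpha$ from $(-K_X)^2\cdot\sigma^*E=(-K_X)^2\cdot E$, i.e.\ $2\alpha-8=8$, is correct and more economical than the paper's lattice computation. You must still check that $\sigma$ is the flop and not merely a pseudo-automorphism. This follows because $\sigma_*H=\sigma^*H=5(-K_X)-H$ is anti-ample relative to $\tau\circ\psi$, which is the defining property of the flop. Then $e=E^3-(8(-K_X)-E)^3=-4+556=552$, computed on $X$ through $X^+\simeq X$.

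Minor points:
\begin{itemize}
\item $X'$ is a double cover of $\PP^3$ branched along a sextic, not a quartic, since $K_{X'}=f^*(K_{\PP^3}+\tfrac12B')$ must equal $-M$.
\item Morrison--Nikulin only gives an abstract K3. You would still need Saint-Donat to see that $h$ is very ample and that $\Gamma$ is an irreducible twisted cubic; the paper gets both directly from Lopez's theorem.
\item Smoothness of $B$ needs the check at the base points that the paper carries out.
\end{itemize}
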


More precisely, we construct a smooth rational cubic curve $C$ on a degree-two del Pezzo threefold $Y = V_2$ such that $X = \Bl_C(Y)$ is a smooth weak Fano threefold of Picard number two with small anticanonical morphism. We then show that the flop of $X$ is realized by a birational involution of $X$, induced by the covering involution of the anticanonical double cover, so that the contraction on the other side of the flop is again the blow-up of a smooth rational cubic curve on $V_2$ and the resulting Sarkisov link is a birational involution of $Y$ with the numerical invariants listed in Case~28.\\

The proof is given in Sections ~2-4.

%%%%%%%%%%%%%%%%%%%%%%%%%%%%%%%%%%%%%%%
\section{Construction of Case 28}
\subsection{A quartic K3 surface containing a twisted cubic}\label{sec:k3}

Fix a twisted cubic $\Gamma\subset\PP^3.$
Its homogeneous ideal is generated by three quadrics. Choose generators
\begin{equation}\label{eq:twisted-ideal}
 I_\Gamma=(Q_1,Q_2,Q_3).
\end{equation}
We will write $h$ for the hyperplane class on any surface in $\PP^3$ under consideration. We use the following form of the Noether-Lefschetz theorem for surfaces containing a prescribed curve \cite[Theorem~II.3.1]{Lopez91}: if
$\Gamma\subset\mathbb{P}^3$ is a smooth connected curve, $d\ge4$, and $\mathcal{I}_\Gamma(d-1)$ is globally generated, then a very general member $S\in|\mathcal{I}_\Gamma(d)|$ is smooth and $\operatorname{Pic} (S)=\mathbb{Z}h\oplus\mathbb{Z}\Gamma$.

\begin{lemma}\label{lem:k3existence}
There exists a smooth quartic surface $S_0\subset\PP^3$ containing $\Gamma$ such that
\begin{equation}\label{eq:PicS}
 \Pic(S_0)=\ZZ h\oplus\ZZ\Gamma.
\end{equation}
Moreover,
\begin{equation}\label{eq:lattice}
 h^2=4,\qquad h\cdot\Gamma=3,\qquad \Gamma^2=-2.
\end{equation}
\end{lemma}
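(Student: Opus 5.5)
The plan is to apply the cited Noether--Lefschetz statement \cite[Theorem~II.3.1]{Lopez91} with $d=4$, and then compute the intersection numbers from adjunction on a K3 surface. First I check the hypotheses. The twisted cubic $\Gamma$ is a smooth connected curve. The condition $d=4\ge 4$ holds. It remains to see that $\mathcal I_\Gamma(3)$ is globally generated. The ideal $I_\Gamma$ is generated in degree $2$ by $Q_1,Q_2,Q_3$, as in \eqref{eq:twisted-ideal}, so $\mathcal I_\Gamma(2)$ is globally generated by these sections. Tensoring the surjection $\OO^{3}\to\mathcal I_\Gamma(2)$ with $\OO(1)$ gives a surjection $\OO(1)^{3}\to \mathcal I_\Gamma(3)$. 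Since $\OO(1)$ is globally generated, so is $\mathcal I_\Gamma(3)$. (Equivalently, $\Gamma$ is $2$-regular in the sense of Castelnuovo--Mumford, hence $3$-regular.)

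The cited theorem then says that a very general $S_0\in|\mathcal I_\Gamma(4)|$ is smooth with $\Pic(S_0)=\ZZ h\oplus\ZZ\Gamma$. For this I also note that $|\mathcal I_\Gamma(4)|$ is nonempty; for instance $Q_1Q_2$ lies in it, and in fact the linear system is large. This proves \eqref{eq:PicS}, with $\Gamma\subset S_0$ by construction.

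For \eqref{eq:lattice}, I would argue as follows:
\begin{itemize}
\item $h^2=\deg S_0=4$.
\item $h\cdot\Gamma=\deg\Gamma=3$.
\item $S_0$ is a smooth quartic, so $K_{S_0}=0$. Adjunction on the smooth rational curve $\Gamma$ then gives $\Gamma^2=2g(\Gamma)-2-K_{S_0}\cdot\Gamma=-2$.
\end{itemize}

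Finally, I would verify that $h$ and $\Gamma$ are linearly independent, so that the sum really is direct. The Gram matrix $\begin{pmatrix}4&3\\3&-2\end{pmatrix}$ has determinant $-17\neq0$, so this holds, consistent with the rank-two statement.

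The only step that needs any care is the global generation hypothesis. Even that is routine, given the classical fact that the three quadrics (the $2\times2$ minors of a $2\times3$ matrix of linear forms) generate the saturated ideal of $\Gamma$. The rest is formal.
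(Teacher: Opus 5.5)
Your proposal is correct and follows essentially the same route as the paper. Both apply Lopez's theorem with $d=4$, getting global generation of $\mathcal I_\Gamma(3)$ from the fact that $I_\Gamma$ is generated by quadrics, and then read off $h^2=4$, $h\cdot\Gamma=3$ and $\Gamma^2=-2$ from degrees and adjunction on the K3 surface. Your extra checks (nonemptiness of $|\mathcal I_\Gamma(4)|$ and the Gram determinant $-17$) are harmless additions.
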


\begin{proof}
Since the homogeneous ideal of a twisted cubic is generated by quadrics, $\mathcal{I}_\Gamma(2)$, and a fortiori $\mathcal{I}_\Gamma(3)$, is globally generated. Lopez's theorem with $d=4$ gives \eqref{eq:PicS}. Since $S_0$ is a quartic K3 surface, $h^2=4$. The equality $h\cdot\Gamma=3$ is the degree of the twisted cubic. Finally adjunction on the K3 surface gives
\[
 2p_a(\Gamma)-2=\Gamma^2+K_{S_0}\cdot\Gamma=\Gamma^2.
\]
As $\Gamma\simeq\PP^1$, we obtain $\Gamma^2=-2$.
\end{proof}

\begin{comment}
\begin{lemma}\label{lem:quadric}
There exists a quadric $Q\subset\PP^3$ such that $Q|_\Gamma$ has six distinct zeros.
\end{lemma}

\begin{proof}
Since $\Gamma\subset\PP^3$ is a twisted cubic, we have
$\Gamma\simeq\PP^1$ and $\OO_\Gamma(1)\simeq\OO_{\PP^1}$
and therefore $\OO_\Gamma(2)\simeq\OO_{\PP^1}(6).$
Consider the restriction map
\[
H^0(\PP^3,\OO_{\PP^3}(2)) \longrightarrow H^0(\Gamma,\OO_\Gamma(2)) \simeq H^0(\PP^1,\OO_{\PP^1}(6)).
\]
Since the homogeneous ideal of $\Gamma$ is generated by three independent quadrics, 
\[
h^0(\PP^3,\mathcal I_\Gamma(2))=3.
\]
Moreover,
\[
h^0(\PP^3,\OO_{\PP^3}(2))=10,
\qquad
h^0(\PP^1,\OO_{\PP^1}(6))=7.
\]
Hence the restriction map is surjective. Thus, as $Q$ varies among quadrics in
$\PP^3$, the section $Q|_\Gamma$ varies over all sections of
$\OO_{\PP^1}(6)$.

The locus of sections of $\OO_{\PP^1}(6)$ having a multiple zero is the
discriminant hypersurface in $H^0(\PP^1,\OO_{\PP^1}(6))$. Its complement is
therefore a nonempty Zariski-open subset. For example, the binary sextic
$s^6-t^6$ has six distinct zeros over $\mathbb C$. Consequently, for a general
quadric $Q$, the section $Q|_\Gamma$ has six distinct zeros.
\end{proof}
\end{comment}

We next choose a general quadric $Q$ that meets $\Gamma$ in six distinct points. For $G\in H^0(\PP^3,\mathcal I_\Gamma(4))$, set $S_0 :=\{G=0\}$ and 
\begin{equation}\label{eq:branch}
 B_Q:=\{Q^2+G=0\}\subset\PP^3.
\end{equation}

\begin{lemma}\label{lem:branchsmooth}
There exists a quartic $G\in H^0(\PP^3,\mathcal I_\Gamma(4))$ such that $S_0$ is smooth, $\Pic(S_0)=\ZZ h\oplus\ZZ\Gamma,$ and the quartic surface $B_Q$ is smooth.
\end{lemma}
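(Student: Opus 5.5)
Within the linear system $|\mathcal I_\Gamma(4)|$ we will show that each of the three required properties holds on a suitably large subset, and then take $G$ in their intersection. By Lopez's theorem, as used in Lemma~\ref{lem:k3existence}, the set of $G$ with $S_0$ smooth and $\Pic(S_0)=\ZZ h\oplus\ZZ\Gamma$ is very general. Its complement is contained in a countable union of proper Zariski-closed subsets of $\PP H^0(\PP^3,\mathcal I_\Gamma(4))$. It therefore suffices to show that smoothness of $B_Q=\{Q^2+G=0\}$ is a nonempty Zariski-open condition on $G$ in the same projective space. A nonempty Zariski-open set is not contained in a countable union of proper closed subsets over $\mathbb C$, so the two sets meet and the lemma follows.

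Openness is automatic, since smoothness is an open condition in flat families of hypersurfaces. For nonemptiness, consider the linear system $\Lambda=\{Q^2+G : G\in H^0(\mathcal I_\Gamma(4))\}$. This is a translate of a linear system, or equivalently the linear system spanned by $Q^2$ and $H^0(\mathcal I_\Gamma(4))$, restricted to members with nonzero $Q^2$-coefficient. We apply Bertini's theorem in two regions.

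\emph{Away from $\Gamma$.} Since $\mathcal I_\Gamma(2)$ is globally generated, $\mathcal I_\Gamma(4)$ is globally generated and in fact very ample off $\Gamma$. So the linear system $H^0(\mathcal I_\Gamma(4))$ has base locus exactly $\Gamma$. Adding $Q^2$ does not change this, and Bertini shows that a general member of $\Lambda$ is smooth outside $\Gamma$.

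\emph{Along $\Gamma$.} Every $G$ vanishes on $\Gamma$, so at a point $p\in\Gamma$ we have $F:=Q^2+G$ with $F(p)=Q(p)^2$. Only points with $Q(p)=0$ lie on $B_Q$, and by the choice of $Q$ these are six distinct points $p_1,\dots,p_6$. At such a point $dF(p_i)=2Q(p_i)\,dQ(p_i)+dG(p_i)=dG(p_i)$. It therefore suffices to find $G\in H^0(\mathcal I_\Gamma(4))$ with $dG(p_i)\neq 0$ for each $i$. For instance, take $G=Q_1L_1+\dots$ with $L_j$ general quadrics. Then $dG(p)=\sum_j L_j(p)\,dQ_j(p)$, and the $dQ_j(p)$ span the conormal space of $\Gamma$ at $p$ because $\Gamma$ is smooth and cut out scheme-theoretically by the $Q_j$. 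Hence $dG(p_i)\neq0$ for general choice, and each of the six conditions is open and nonempty. Thus a general member of $\Lambda$ is smooth at the points of $\Gamma\cap B_Q$.

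Combining the two regions gives a nonempty open set of $G$ with $B_Q$ smooth. The main subtlety is this base-locus analysis along $\Gamma$, since Bertini alone gives no control there. The point is that $B_Q\cap\Gamma$ is finite and reduced, which comes from the choice of $Q$. The remaining work is only to phrase the "very general versus open" intersection carefully.
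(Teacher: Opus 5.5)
Your argument is correct and is essentially the paper's: Bertini for the system spanned by $Q^2$ and $H^0(\mathcal I_\Gamma(4))$ away from the six base points, the identity $dF_p=dG_p$ at those points, and intersecting a Zariski-open condition with Lopez's very general one. The only variation is that you get $dG(p_i)\neq 0$ directly from the $dQ_j(p)$ spanning the conormal space, whereas the paper reads it off from the smoothness of $S_0=\{G=0\}$, which Lopez already supplies.

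There are two harmless slips. First, adding $Q^2$ does shrink the base locus, to $\Gamma\cap\{Q=0\}$, which only helps. Second, $B_Q$ changes when $G$ is rescaled, so the open condition should be posed on the affine space $H^0(\mathcal I_\Gamma(4))$ rather than its projectivization; Lopez's scale-invariant condition pulls back to that affine space, so the intersection argument still goes through.
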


\begin{proof}
Let $V=H^0(\mathbb{P}^3,\mathcal{I}_\Gamma(4))$ and consider the linear system
\[
\Lambda=\mathbb{P}\bigl(\langle Q^2\rangle+V\bigr)\subset|\mathcal{O}_{\mathbb{P}^3}(4)|.
\]
Since $\mathcal{I}_\Gamma(4)$ is globally generated, the common zero locus of
all sections of $V$ is $\Gamma$. Hence
\[
\operatorname{Bs}(\Lambda)=\Gamma\cap\{Q=0\},
\]
which consists of six distinct points.

By Bertini's theorem, a general member of $\Lambda$ is smooth away from its
base locus. On the open subset of $\Lambda$ where the coefficient of $Q^2$ is
nonzero, we may rescale the equation and write a general member as
\[
Q^2+G=0,
\qquad G\in V.
\]
Thus, for $G$ in a nonempty Zariski-open subset of $V$, the corresponding
surface $B_Q$ is smooth away from the six points of
$\Gamma\cap\{Q=0\}$.

By the Noether-Lefschetz statement used in Lemma~\ref{lem:k3existence}, for a
very general $G\in V$ the surface $S_0=\{G=0\}$ is smooth and satisfies $\Pic(S_0)=\ZZ h\oplus\ZZ\Gamma.$ We may therefore choose $G$ satisfying this condition and the preceding
Bertini condition simultaneously.

It remains to check smoothness of $B_Q$ at the six base points. Let
$p\in\Gamma$ satisfy $Q(p)=0$, and put $F=Q^2+G.$

Then $dF_p=2Q(p)dQ_p+dG_p=dG_p.$ Since $S_0=\{G=0\}$ is smooth, $dG_p\neq0$. Hence $dF_p\neq0$, so $B_Q$ is
smooth at $p$. Thus $B_Q$ is smooth everywhere.
\end{proof}

Fix $Q$ and $G$ as in Lemma ~\ref{lem:branchsmooth}.
Since \eqref{eq:twisted-ideal} generates $I_\Gamma$, its degree-$4$ piece is
generated by the products of the $Q_i$ with quadrics. Thus there exist quadrics
$A_1,A_2,A_3$ such that $G=Q_1A_1+Q_2A_2+Q_3A_3.$

\subsection{The quartic double solid and the rational cubic curve}\label{sec:double-solid}

Let
\begin{equation}\label{eq:Yweighted}
 Y=\{w^2=Q^2+G\}\subset\PP(1,1,1,1,2),
\end{equation}
where $w$ has weight $2$. Projection to the weight-$1$ coordinates gives a double cover $q\colon Y\longrightarrow\PP^3$ branched over $B_Q$. Since $B_Q$ is smooth, $Y$ is smooth. This is the standard degree-$2$ del Pezzo threefold (quartic double solid). Set $ H=q^*\OO_{\PP^3}(1).$
Then
\begin{equation}\label{PicY}
 \Pic(Y)=\ZZ H,\qquad -K_Y=2H,\qquad H^3=2,
\end{equation}
and hence $(-K_Y)^3=16$; see, for example, \cite[Chapter~12]{IP99}.

Over $\Gamma$, the quartic $G$ vanishes identically, and therefore the equation of $Y$ becomes $w^2=Q^2.$
Thus the inverse image of $\Gamma$ splits into two curves
\begin{equation}\label{eq:splitcurves}
 q^{-1}(\Gamma)=C\cup C^-, \,\, C=\{x\in\Gamma,\ w=Q(x)\},\,\, C^-=\{x\in\Gamma,\ w=-Q(x)\}.
\end{equation}
Both project isomorphically to $\Gamma$, and hence
\begin{equation}\label{eq:Cdata}
 C\simeq C^-\simeq\PP^1,
 \qquad H\cdot C=H\cdot C^-=3.
\end{equation}
In particular, $C$ is a smooth rational cubic curve in the sense of the E1 tables.

\subsection{The blow-up and its anticanonical system}\label{sec:blowup}

Let
$$
 \pi\colon X=\Bl_C(Y)\longrightarrow Y
$$
be the blow-up, and let $E$ be its exceptional divisor. Since $Y$ and $C$ are smooth, $X$ is smooth, and $
 \Pic(X)=\ZZ H\oplus\ZZ E, \rho(X)=2$, where we continue to write $H$ for $\pi^*H$. The canonical divisor formula gives $-K_X=2H-E.$

\subsection{Intersection numbers}
The standard intersection formulas for the blow-up of a smooth threefold
along a smooth curve, together with
$$\deg N_{C/Y}=(-K_Y).C+2g(C)-2=2H.C-2=6-2=4,$$
give the basic intersection numbers
\begin{equation}\label{eq:HEtable}
 H^3=2,\,\,\, H^2E=0,\,\,\, HE^2=-H.C=-3,\,\,\, E^3=-\deg N_{C/Y}=-4.
\end{equation}
Expanding $-K_X=2H-E$ against \eqref{eq:HEtable} gives
\begin{align}
 (-K_X)^3  &=8H^3-12H^2E+6HE^2-E^3=2,  \label{eq:-K_X3}\\
 (-K_X)^2.H &=4H^3-4H^2E+HE^2=5,    \label{eq:-K_X2H}\\
 (-K_X)^2.E &=4H^2E-4HE^2+E^3=8,    \label{eq:-K_X2E}\\
 (-K_X).E^2 &=2HE^2-E^3=-2.       \label{eq:-K_XE2}
\end{align}

\section{The Anticanonical Morphism}
\subsection{Global generation}

Put $t=w-Q.$ The defining equation of $Y$ and $G$ give
\begin{equation}\label{eq:keyrelation}
 t(t+2Q)=w^2-Q^2=G=Q_1A_1+Q_2A_2+Q_3A_3.
\end{equation}
The curve $C$ is scheme-theoretically cut out on $Y$ by
\begin{equation}\label{eq:idealC}
 \mathcal I_{C/Y}=(t,Q_1,Q_2,Q_3).
\end{equation}
Indeed, imposing $Q_1=Q_2=Q_3=0$ places the image point on $\Gamma$, and imposing $t=0$ chooses the component $w=Q$ in \eqref{eq:splitcurves}. Each generator in \eqref{eq:idealC} is a section of $\OO_Y(2H)$. Therefore 
\begin{equation}\label{eq:globalgenI}
 \mathcal I_{C/Y}(2H)
\end{equation}
is globally generated. By equations \eqref{eq:-K_X3} and \eqref{eq:globalgenI}, we have proved the following:

\begin{proposition}\label{prop:weakfano}
The anticanonical divisor $-K_X$ is globally generated, nef and big. In particular $X$ is a smooth weak Fano threefold, and $(-K_X)^3=2$.
\end{proposition}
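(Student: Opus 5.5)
The plan is to deduce all three properties of $-K_X$ from the global generation of $\mathcal I_{C/Y}(2H)$ in \eqref{eq:globalgenI} together with the self-intersection \eqref{eq:-K_X3}.

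\emph{Global generation.} Since $\pi$ is the blow-up of the smooth curve $C$, we have $\pi^{-1}\mathcal I_{C/Y}\cdot\OO_X=\OO_X(-E)$. Pulling back the sections $t,Q_1,Q_2,Q_3$ of $\mathcal I_{C/Y}(2H)$ and dividing by the equation of $E$ gives sections of $\OO_X(2H-E)=\OO_X(-K_X)$. Two things then need checking.
\begin{enumerate}[label=(\roman*)]
\item Away from $E$ these four sections have no common zero, because the generators in \eqref{eq:idealC} cut out exactly $C$.
\item Along $E$ they have no common zero. Here $E=\PP(N^\vee_{C/Y})$, and the restrictions of the sections to $E$ are the images of the generators in the conormal bundle $\mathcal I_C/\mathcal I_C^2$. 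Since the generators generate $\mathcal I_C$, their images generate $N^\vee_{C/Y}$ at every point of $C$. This surjectivity is exactly the statement that the linear forms they induce on each fibre $\PP(N^\vee_{C/Y,x})$ have no common zero.
\end{enumerate}
Together these show that $|-K_X|$ is base point free. This is the only step with real content. The main subtle point is that \eqref{eq:idealC} must hold scheme-theoretically, that is, $\mathcal I_{C/Y}$ must really be generated by $t,Q_1,Q_2,Q_3$ and not merely have the right radical.

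To justify this, use \eqref{eq:keyrelation}. The ideal $(Q_1,Q_2,Q_3)$ on $Y$ defines $q^{-1}(\Gamma)=C\cup C^-$, which is reduced because $q$ is étale over $\Gamma\setminus B_Q$. Adding $t=w-Q$ then separates out the component $C$. At the six points where $C$ meets $C^-$, namely where $Q=0$ on $\Gamma$, a local computation is needed. Locally $Y\to\PP^3$ is $w^2=Q^2+G$, and $C$ is the graph $w=Q$ over $\Gamma$. So $\OO_Y/(t,Q_1,Q_2,Q_3)\cong\OO_{\PP^3}/(Q_1,Q_2,Q_3)$, because $w$ is eliminated by $t$ and the relation $t(t+2Q)=G$ becomes $G\equiv0$, which is automatic. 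Hence the quotient is $\OO_\Gamma$, and the ideal is correct everywhere.

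\emph{Nef and big.} A globally generated divisor is nef. Since $-K_X$ is nef, bigness is equivalent to $(-K_X)^3>0$, and $(-K_X)^3=2$ by \eqref{eq:-K_X3}. Therefore $X$ is a smooth weak Fano threefold with $(-K_X)^3=2$, as claimed.
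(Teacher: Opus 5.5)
Your proposal is correct and follows the paper's argument: $t,Q_1,Q_2,Q_3$ globally generate $\mathcal I_{C/Y}(2H)$, which makes $-K_X=2H-E$ base point free and hence nef, and $(-K_X)^3=2>0$ then gives bigness. Your elimination computation $\OO_Y/(t,Q_1,Q_2,Q_3)\cong\OO_{\PP^3}/(Q_1,Q_2,Q_3)=\OO_\Gamma$, which uses $G\in(Q_1,Q_2,Q_3)$, is a useful addition: it checks \eqref{eq:idealC} scheme-theoretically on all of $Y$ at once (so the separate treatment of the six points is not needed), whereas the paper justifies \eqref{eq:idealC} only set-theoretically.
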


\subsection{The anticanonical morphism}\label{sec:conjugate}
We now show that the anticanonical morphism is nontrivial and small. Let $R\subset X$ be the strict transform of $C^-$. We compute the intersection of $R$ with $E$. On $C^-$ we have
$Q_i=0$ and $w=-Q$, and hence
\[
t|_{C^-}=(w-Q)|_{C^-}=-2Q|_\Gamma.
\]
Thus the scheme-theoretic intersection $C\cap C^-$ is defined on
$\Gamma$ by $Q|_\Gamma=0$. By our choice of $Q$, this is a reduced
scheme of length six.

Since $C\cap C^-$ is an effective Cartier divisor on the smooth curve
$C^-$, the blow-up of $C^-$ along $C\cap C^-$ is isomorphic to
$C^-$. Hence $\pi$ restricts to an isomorphism
$R\xrightarrow{\sim} C^-.$
Moreover,
\[
\mathcal O_X(-E)|_R
\simeq \mathcal I_{C/Y}\cdot\mathcal O_{C^-}
\simeq \mathcal O_{C^-}(-C\cap C^-),
\]
and therefore $E.R=\operatorname{length}(C\cap C^-)=6. $ Since $H.R=3$, we obtain $(-K_X).R=(2H-E).R=6-6=0.$ Thus $\varphi_{|-K_X|}$ contracts $R$. Let
\[
X\xrightarrow{\psi}X'\xrightarrow{f}\varphi_{|-K_X|}(X)
\]
be its Stein factorization. Since $-K_X$ is big,
$\varphi_{|-K_X|}$ is generically finite onto its image, and hence
$\dim X'=3$. Since $\psi$ has connected fibers and is generically
finite, it is birational. Thus $\psi\colon X\longrightarrow X'$ is a nontrivial birational anticanonical contraction.

\subsection{The K3 lattice and smallness}\label{sec:smallness}

It remains to show that the anticanonical morphism does not contract a divisor. We use the K3 surface constructed above.

On $Y$, consider the divisor $\overline S :=\{t=w-Q=0\}$. By \eqref{eq:keyrelation}, the equation $t=0$ on $Y$ forces $G=0$. Conversely, if $G=0$ and $w=Q$, then the equation of $Y$ is satisfied. Thus projection $q$ identifies $\overline S$ with $S_0$:
$$
 \overline S\xrightarrow{\sim}S_0.
$$
In particular $\overline S$ is smooth. Since $t$ has weight $2$, $\overline S\sim2H=-K_Y$. It contains $C$ with multiplicity one.

Let $S\subset X$ be the strict transform of $\overline S$. Since $C$
is a Cartier divisor on the smooth surface $\overline S$, the blow-up
of $\overline S$ along $C$ is isomorphic to $\overline S$. Hence $S\simeq \overline S\simeq S_0,$ and $S\sim 2H-E=-K_X.$ Under this identification, $H|_S=h$ and $E|_S=\Gamma.$
Indeed,
\[
\mathcal O_X(-E)|_S\simeq \mathcal I_{C/\overline S}
\simeq \mathcal O_{\overline S}(-C).
\]
Consequently, $(-K_X)|_S=2h-\Gamma.$ Using
$h^2=4, h\cdot\Gamma=3, \Gamma^2=-2,$ we obtain
\[
((-K_X)|_S)^2=(2h-\Gamma)^2=16-12-2=2.
\]
Since $-K_X$ is nef, $(-K_X)|_S$ is nef.

\begin{lemma}\label{lem:Lampleness}
The divisor $(-K_X)|_S=2h-\Gamma$ is ample on $S$.
\end{lemma}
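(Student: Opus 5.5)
Since $L:=(-K_X)|_S=2h-\Gamma$ is nef with $L^2=2>0$, the Nakai--Moishezon criterion reduces ampleness to showing that $L$ has positive degree on every irreducible curve of $S$. A nef and big divisor on a K3 surface fails to be ample exactly when there is a $(-2)$-curve $\Delta$ with $L\cdot\Delta=0$: if $L\cdot D=0$ for an irreducible curve $D$, the Hodge index theorem forces $D^2<0$, and on a K3 surface this means $D^2=-2$. So we will show that the lattice $\Pic(S)=\ZZ h\oplus\ZZ\Gamma$ of Lemma~\ref{lem:k3existence} contains no class $\delta$ with $\delta^2=-2$ and $L\cdot\delta=0$.

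Write $\delta=ah+b\Gamma$ with $a,b\in\ZZ$. Using \eqref{eq:lattice}, we get
\[
L\cdot\delta=(2h-\Gamma)\cdot(ah+b\Gamma)=8a+6b-3a+2b=5a+8b .
\]
Setting this equal to zero gives $5a=-8b$, so $a=8k$ and $b=-5k$ for some integer $k$. Then
\[
\delta^2=4a^2+6ab-2b^2=256k^2-240k^2-50k^2=-34k^2 .
\]
The value $-34k^2$ is never $-2$ for integer $k$. Indeed, the orthogonal complement of $L$ in $\Pic(S)$ is generated by $8h-5\Gamma$, which has square $-34$. Hence no $(-2)$-class, and therefore no irreducible curve, is orthogonal to $L$. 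It follows that $L\cdot D>0$ for every irreducible curve $D\subset S$, and together with $L^2=2>0$ this gives ampleness.

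The only subtle point is the Picard lattice. The argument depends on $\Pic(S)$ being exactly $\ZZ h\oplus\ZZ\Gamma$ and not a larger overlattice. This is guaranteed by Lemma~\ref{lem:branchsmooth}, where $G$ was chosen very general, and by the identification $S\simeq S_0$ with $H|_S=h$ and $E|_S=\Gamma$. Once that is in place, the rest is the arithmetic above.
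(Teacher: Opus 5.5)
Your proof is correct and follows essentially the same route as the paper. You show that the orthogonal complement of $2h-\Gamma$ in $\Pic(S)=\ZZ h\oplus\ZZ\Gamma$ is spanned by $8h-5\Gamma$, which has square $-34$, so no $(-2)$-class is orthogonal to $L$, and you conclude by the nef-and-big ampleness criterion for K3 surfaces; where the paper cites that criterion, you justify it directly via Nakai--Moishezon, the Hodge index theorem and adjunction.
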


\begin{proof}
Let $D_S=xh+y\Gamma\in\Pic(S)$. Using \eqref{eq:lattice},
\[
(-K_X)|_S\cdot D_S=(2h-\Gamma)\cdot(xh+y\Gamma)=5x+8y,
\]
so $(-K_X)|_S^\perp\cap\Pic(S)$ is generated by $8h-5\Gamma$, with
\[
 (8h-5\Gamma)^2=64\cdot4-80\cdot3+25(-2)=-34.
\]
Every nonzero class orthogonal to $(-K_X)|_S$ therefore has square $-34k^2$, so $(-K_X)|_S^\perp$
contains no class of square $-2$. Since $((-K_X)|_S)^2=2>0$, the standard ampleness
criterion for K3 surfaces (a nef class of positive square is ample unless
it is orthogonal to an effective class of square $-2$; see e.g.\
\cite[Ch.~8]{Huy16}) shows that $(-K_X)|_S$ is ample.
\end{proof}

\begin{proposition}\label{prop:small}
The anticanonical contraction $\psi\colon X\to X'$ is small and nontrivial.
\end{proposition}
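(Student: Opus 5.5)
Nontriviality was already established in \S\ref{sec:conjugate}: the curve $R$ satisfies $(-K_X).R=0$, so $\psi$ contracts $R$. It remains to show that $\psi$ contracts no divisor. The plan is to argue by contradiction: suppose an irreducible surface $F\subset X$ is contracted by $\psi$, so that $\psi(F)$ is a curve or a point. Then $(-K_X)^2.F=0$, and every curve $\gamma\subset F$ lying in a fiber of $\psi|_F$ satisfies $(-K_X).\gamma=0$. We will show that such an $F$ must meet $S$ along a curve, and then derive a contradiction from Lemma~\ref{lem:Lampleness}.

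\textbf{Step 1: $F$ meets $S$ in a curve.} Since $S\sim -K_X$ and $-K_X$ is globally generated (Proposition~\ref{prop:weakfano}), we cannot assume $S$ is general. However, $S$ is an ample-on-itself divisor in the class $-K_X$, and we argue as follows. First, $F\neq S$, because $((-K_X)|_S)^2=2>0$ means $S$ is not contracted. Next, $S\cap F$ is nonempty: the class $-K_X$ is big and nef, so $-K_X.\gamma>0$ fails only on $\psi$-exceptional curves, and $S$ is the pullback of a hyperplane section of $\varphi_{|-K_X|}(X)$. Hence $S=\psi^{-1}(\psi(S))$ is the preimage of an ample Cartier divisor on $X'$. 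That ample divisor meets the curve or point $\psi(F)$, so $S\supseteq \psi^{-1}(\psi(S)\cap\psi(F))$. Consequently $S$ contains entire fibers of $\psi|_F$ over points of $\psi(F)\cap\psi(S)$. These fibers are positive-dimensional, since $F$ is a surface mapping to something of dimension at most one.

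\textbf{Step 2: Contradiction.} By Step 1, $S$ contains a curve $\gamma$ contracted by $\psi$, so $(-K_X)|_S.\gamma=(-K_X).\gamma=0$. But $(-K_X)|_S$ is ample on $S$ by Lemma~\ref{lem:Lampleness}, which forces $(-K_X)|_S.\gamma>0$, a contradiction. Therefore the exceptional locus of $\psi$ contains no divisor, and it consists of finitely many curves. Moreover, any $\psi$-contracted curve is disjoint from $S$ by the same argument, which gives a consistency check: $S.R=(-K_X).R=0$ is compatible with $R\cap S=\emptyset$.

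\textbf{Main obstacle.} The delicate point is Step 1, namely justifying that $S$ is the full preimage of a hyperplane section of $X'$, so that it must contain whole contracted fibers. The cleanest route is to note that $S\in|-K_X|$ and $-K_X=\psi^*A$ for an ample $A$ on $X'$ (by the Stein factorization of the anticanonical morphism). Then $S=\psi^*S'$ for the effective Cartier divisor $S'=\psi_*S\in|A|$. The existence of $S'$ follows because $\psi_*\OO_X=\OO_{X'}$ gives $H^0(X,-K_X)=H^0(X',A)$. Since $S'$ is ample, it meets every positive-dimensional subvariety $\psi(F)$, and if $\psi(F)$ is a point $p$, we instead use that $S'$ can be taken to be an arbitrary member. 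Even in that case, $S$ itself is fixed, so the argument needs adjusting. If $\psi(F)=\{p\}$ with $p\notin S'$, we use the surface $F$ directly: $F\cdot S\cdot H$ is computed from $F\cdot(-K_X)\cdot H=0$ because $-K_X|_F\equiv 0$. On the other hand, $H|_F$ is big, since $\pi$ contracts only curves in $E$ and $F\neq E$ (as $(-K_X)^2.E=8\neq0$). This would make $-K_X|_F$ numerically trivial on a surface where $-K_X=2H-E$, and we then compare with $H^2.F>0$ and $(2H-E)^2.F=0$ to reach the contradiction. I expect this point case to require the most care.
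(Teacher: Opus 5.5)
Your argument is correct in outline, but it takes a different route from the paper.

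\textbf{The paper's route.} The paper makes no case distinction. From $(-K_X)^2\cdot D=5a+8b=0$ it gets $D\sim m(8H-5E)$ with $m\neq 0$. This class restricts to the nonzero class $m(8h-5\Gamma)\in\Pic(S)$, so $D\cap S$ is a nonzero effective divisor on $S$. Its $(-K_X)|_S$-degree is $(-K_X)^2\cdot D=0$, which contradicts Lemma~\ref{lem:Lampleness}. This works whether $\psi(D)$ is a curve or a point.

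\textbf{Your route.} You use instead the geometric fact that $S=\psi^{-1}(S')$ for an ample Cartier divisor $S'$ on $X'$, equivalently $S=\varphi^{-1}(P)$ for a plane $P$. The paper only exploits this later, in the proof of Theorem~\ref{thm:involution}. Your approach has two advantages: it handles the case $\dim\psi(F)=1$ without knowing the class of $F$, and it yields $\mathrm{Exc}(\psi)\cap S=\emptyset$ for free. Its limitation is that it cannot detect a divisor contracted to a point outside $S'$, which is what forces your case split.

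\textbf{Closing the point case.} You left this case as a sketch. It closes in one line and needs neither bigness of $H|_F$ nor the K3 surface. If $\psi(F)$ is a point, then $-K_X|_F\equiv 0$. Writing $F\sim aH+bE$, both $(-K_X)^2\cdot F=5a+8b$ and $(-K_X)\cdot H\cdot F=4a+3b$ vanish. The determinant $32-15=17$ is nonzero, so $F\equiv 0$, which is impossible for a nonzero effective divisor.

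Note that $H^2\cdot F>0$ together with $(2H-E)^2\cdot F=0$, as you phrased the comparison, is not contradictory by itself; for example $a=8$, $b=-5$ satisfies both. The contradiction comes from the second linear condition $(-K_X)\cdot H\cdot F=0$, which you did record.

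With this written out, your proof is complete. The paper's version is shorter because the injectivity of $\Pic(X)\to\Pic(S)$ replaces both the pullback structure of $S$ and the case split.
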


\begin{proof}
Nontriviality follows from $(-K_X).R=0$. We show that no divisor is contracted.

Suppose $D\subset X$ is an irreducible divisor contracted by $\psi$. Since
$|-K_X|$ is base point free, $\varphi_{|-K_X|}$ factors as $f\circ\psi$ with
$f\colon X'\to\mathbb{P}^N$ finite, so $-K_X=\psi^*A'$ with
$A'=f^*\mathcal{O}_{\mathbb{P}^N}(1)$ ample on $X'$. As $\dim\psi(D)\le1$,
the divisor $(-K_X)|_D$ is pulled back from a variety of dimension at most
one, so $(-K_X)^2\cdot D=\bigl((-K_X)|_D\bigr)^2=0$.

Write $D\sim aH+bE$ for $a,b\in\mathbb Z$. Since $D$ is contracted by $\psi$,
\[
0=(-K_X)^2\cdot D=5a+8b.
\]
Hence $D\sim m(8H-5E)$ for some nonzero integer $m$.

Since $S\sim -K_X$, we have $(-K_X)^2\cdot S=(-K_X)^3=2,$ so $S$ is not contracted by $\psi$. In particular, $D\neq S$, and
therefore $S$ is not a component of $D$. Restricting to $S$ and using $H|_S=h$ and $E|_S=\Gamma,$ gives $D|_S\sim m(8h-5\Gamma).$
This class is nonzero. Since $D$ is effective and does not contain $S$, its restriction to $S$ defines an effective divisor. Moreover,
this divisor is nonzero, since otherwise
$\mathcal O_S(D|_S)\simeq\mathcal O_S$, contrary to the fact that
$m(8h-5\Gamma)\neq0$ in $\operatorname{Pic}(S)$. Thus
\[
(-K_X)|_S\cdot D|_S=(-K_X)^2\cdot D=0,
\]
contradicting the ampleness of $(-K_X)|_S$. Hence $\psi$ contracts no divisor.
Since it contracts $R$, the contraction $\psi$ is small and
nontrivial.
\end{proof}

\section{Identification with E1-E1 Case No. 28}\label{sec:identification}

Since $X$ is smooth and $\psi$ is a small $K$-trivial contraction, by
\cite{KM98} there is a flop
\[
\chi\colon X \dashrightarrow X^+ ,
\]
with $\psi^+\colon X^+ \to X'$ the corresponding small contraction. The
threefold $X^+$ is again smooth, has Picard rank two, and $-K_{X^+}$ is nef
and big. Its second extremal ray gives a $K$-negative contraction
\[
\pi^+\colon X^+ \longrightarrow Y^+ ,
\]
which we now identify. The key point is that the flop is realized by a
birational involution of $X$.

\begin{theorem}\label{thm:involution}
The Sarkisov link of Case~28 is symmetric. More precisely, the anticanonical
morphism of $X$ factors through a finite double cover $f\colon X' \to \PP^3$
with covering involution $\tau$, and the birational involution
\[
\sigma := \psi^{-1}\circ\tau\circ\psi\colon X \dashrightarrow X
\]
realizes the flop of $\psi$: there is an isomorphism $g\colon X^+ \to X$ with $\tau\circ\psi\circ g = \psi^+$ and $g\circ\chi = \sigma$. In particular $\pi^+ = \pi\circ g$ up to an isomorphism $Y^+ \simeq Y$, and under this
identification the Sarkisov link $\Phi = \pi^+\circ\chi\circ\pi^{-1}\colon Y \dashrightarrow Y$ is the birational involution $\pi\circ\sigma\circ\pi^{-1}$.
\end{theorem}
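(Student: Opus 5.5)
We first make the anticanonical map explicit. By \eqref{eq:globalgenI}, $|-K_X|=|2H-E|$ is spanned by the pullbacks of $t,Q_1,Q_2,Q_3$, since $h^0(\mathcal I_{C/Y}(2H))$ equals the dimension of the span of these four sections. The weight-two sections of $Y$ are $w$ together with the ten quadrics in $x_0,\dots,x_3$. Those vanishing on $C$ are exactly the combinations of $t$ and $Q_1,Q_2,Q_3$: restricting to $C\simeq\Gamma$, the section $w$ becomes $Q$, so the only constraint is vanishing on $\Gamma$. This gives $h^0=4$. Hence $\varphi_{|-K_X|}\colon X\to\PP^3$, $x\mapsto[t:Q_1:Q_2:Q_3]$, and $(-K_X)^3=2$ forces it to be generically finite of degree $2$ onto $\PP^3$. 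So $X'\to\PP^3$ is a finite double cover. Since $X'$ is normal, $\tau$ is the Galois involution of $X'$ over $\PP^3$.

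To see the involution concretely, we use \eqref{eq:keyrelation}. On $X$ the ratios $t:Q_i$ are fixed by the point of $\PP^3$, and the relation $t(t+2Q)=\sum Q_iA_i$ shows that the complementary coordinate $t':=t+2Q=w+Q$ satisfies $tt'=\sum Q_iA_i$. Thus $\tau$ exchanges the roles of $C$ and $C^-$, and $\tau\circ\psi$ is birationally the map $w\mapsto -w$ on $Y$, suitably conjugated. We plan to prove the following claim. \emph{The composition $\tau\circ\psi\colon X\to X'$ is again a small contraction, but it is not isomorphic to $\psi$ over $X'$, because $\tau^*$ acts on $N^1(X)$ nontrivially.} The action of $\tau$ fixes $-K_X=\psi^*A'$, since $\tau^*A'=A'$. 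As $\tau$ is nontrivial on the rank-two space $N^1(X)$, it must send the other generator to the reflection of $E$ across $-K_X$, namely $\sigma_*E=\alpha(-K_X)+\beta E$. We compute this class directly. The class of $\tau$-conjugate divisors is $D+E\sim \psi^*(\text{pullback of a branch-related class})$. Concretely, $\sigma_*H$ is the strict transform of $\{w\mapsto-w\}^*H=H$ adjusted along $C^-$, and we verify that $\sigma$ fixes $-K_X$ and sends $H\mapsto 8(-K_X)-\ldots$, matching $\alpha=8$, $\beta=-1$ in \eqref{eq:table-defect}. Since $\beta=-1\neq1$, the map $\sigma$ is not an isomorphism.

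The isomorphism $g$ comes from the universal property of the flop. The flop $\psi^+\colon X^+\to X'$ is characterized as the unique small contraction to $X'$ from a $\QQ$-factorial variety for which $-D$ is $\psi^+$-ample whenever $D$ is $\psi$-ample; equivalently, $X^+=\Proj_{X'}\bigoplus\psi_*\OO_X(-mD)$. Take the morphism $\tau\circ\psi\colon X\to X'$. It is small, since $\tau$ is an automorphism of $X'$. For a $\psi$-ample divisor $D$ (for example $-E$ plus a multiple of $-K_X$), the divisor $\sigma_*D$ is $\tau\circ\psi$-ample. We check that $\sigma_*D$ is anti-ample relative to $\psi$, using $\sigma_*=$ the reflection fixing $-K_X$ and negating the $K$-orthogonal direction. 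Then uniqueness of the flop gives $g\colon X^+\xrightarrow{\sim}X$ with $\tau\circ\psi\circ g=\psi^+$, and $g\circ\chi=\psi^{-1}\tau\psi=\sigma$ as birational maps. The remaining extremal ray of $X^+$ is carried by $g$ to the ray of $\pi$. Hence $\pi^+=\pi\circ g$ with $Y^+\simeq Y$, and $\Phi=\pi\circ g\circ\chi\circ\pi^{-1}=\pi\sigma\pi^{-1}$. Since $\tau^2=1$, $\Phi$ is an involution.

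The main obstacle is the second step: showing that $\tau$ acts nontrivially on $N^1(X)$, that is, that $\tau\circ\psi$ is genuinely the opposite small contraction rather than $\psi$ itself. We first try to show that $\tau$ does not lift to an automorphism of $X$. Such a lift would preserve $E$, but the birational action on $Y$ sends $C$ to $C^-$, whose strict transform $R$ is a flopping curve and not a divisor. Equivalently, we exhibit a divisor whose $\tau$-image has a different class, for instance the strict transform of $\{w+Q=0\}$, which contains $C^-$ but not $C$ generically. Comparing its class with $\tau^*S$ then determines the reflection explicitly.
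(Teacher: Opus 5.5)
Your architecture matches the paper's: an explicit basis $t,Q_1,Q_2,Q_3$ of $H^0(X,-K_X)$, the double cover $f$ with involution $\tau$, and uniqueness of the flop. Your direct count of weight-two sections vanishing on $C$ is a valid substitute for Riemann--Roch plus Kawamata--Viehweg. You are also right that, since $\sigma^*$ is an involution of $\Pic(X)$ fixing the $\psi$-trivial class $-K_X$, everything reduces to $\sigma^*\neq\mathrm{id}$.

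\textbf{The gap.} That nontriviality is exactly the step you do not prove, and the heuristic behind your attempt is false.
\begin{itemize}
\item The map $w\mapsto -w$ has nothing to do with $\tau$. It pulls $t$ back to $-(t+2Q)$, which is not in $\langle t,Q_1,Q_2,Q_3\rangle$ because $Q\notin I_\Gamma$, so it does not even preserve $|-K_X|$.
\item Concretely, the two points of a general fibre of $\varphi$ lie over two distinct points of a single secant line of $\Gamma$, since the ratio $(Q_1:Q_2:Q_3)$ is constant on secant lines.
\item So nothing supports ``$\tau$ sends $C$ to $C^-$''; in fact $\sigma$ sends $E$ to a divisor of class $8(-K_X)-E$. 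Your contradiction for a biregular lift therefore collapses.
\item The divisor $\{w+Q=0\}$ does not help as stated. Its strict transform has class $2H$ and contains the flopping curve $R$, and $\tau^*S=S$ only recovers $\sigma^*(-K_X)=-K_X$.
\item Obtaining $\alpha=8$, $\beta=-1$ by matching the table is circular, since these values are to be derived here.
\item A minor sign error: $E\cdot R=6>0$, so $E$ and $H$ are $\psi$-ample, while $-E+m(-K_X)$ is $\psi$-anti-ample, not $\psi$-ample.
\end{itemize}

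\textbf{How the paper closes it.} The paper restricts to the K3 surface $S=\varphi^*\{y_0=0\}\in|-K_X|$.
\begin{itemize}
\item $S$ is $\sigma$-stable and disjoint from the exceptional locus of $\psi$. A contracted curve $R'$ has $-K_X\cdot R'=0$, so it cannot lie on $S$, because $(-K_X)|_S$ is ample (Lemma~\ref{lem:Lampleness}). Then $S\cdot R'=0$ forces $R'\cap S=\emptyset$.
\item Hence $\sigma|_S$ is the covering involution $\iota$ of $\varphi_{|L|}\colon S\to\PP^2$. The transfer argument gives $H^2(S,\mathbb{Q})^\iota=\mathbb{Q}L$, so $\iota^*=-1$ on $L^\perp=\ZZ(8h-5\Gamma)$.
\item Since $\Pic(X)\to\Pic(S)$ is an isomorphism, this yields $\sigma^*E=8(-K_X)-E$.
\end{itemize}

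Alternatively, your remark that $E+\sigma_*E$ should be a pullback can be made rigorous directly.
\begin{itemize}
\item If $\sigma(E)=E$, then $\varphi^*\varphi(E)$ would equal $E$ or $2E$, while also $\varphi^*\varphi(E)\sim k(-K_X)$ with $k=\deg\varphi(E)$. This is impossible in $\Pic(X)=\ZZ H\oplus\ZZ E$ with $-K_X=2H-E$.
\item Hence, using that $\psi$ is small, $\varphi^*\varphi(E)=E+\sigma_*E$, and $\varphi|_E$ is birational onto its image.
\item So $\deg\varphi(E)=(-K_X)^2\cdot E=8$, giving the same formula.
\end{itemize}
Either way, once $\sigma^*$ is known, the rest of your argument goes through as in the paper.
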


\begin{proof}
By Proposition~\ref{prop:weakfano}, $-K_X$ is nef and big with
$(-K_X)^3 = 2$, so Riemann-Roch and Kawamata-Viehweg vanishing give
$h^0(X,-K_X) = \tfrac12(-K_X)^3 + 3 = 4$. The sections $t, Q_1, Q_2, Q_3$
of \eqref{eq:idealC} are linearly independent sections of $-K_X = 2H - E$,
hence form a basis of $H^0(X,-K_X)$, and the anticanonical morphism is
\[
\varphi = (t : Q_1 : Q_2 : Q_3)\colon X \longrightarrow \PP^3 .
\]
Since $-K_X$ is big, $\varphi$ is generically finite onto $\PP^3$, of degree
$(-K_X)^3 = 2$. Let $X \xrightarrow{\ \psi\ } X' \xrightarrow{\ f\ } \PP^3$
be the Stein factorization. By Proposition~\ref{prop:small}, $\psi$ is small
and birational, while $f$ is finite of degree two. As $X'$ is normal, $f$
has a covering involution $\tau\colon X' \to X'$ with $f\circ\tau = f$, and
\[
\sigma = \psi^{-1}\circ\tau\circ\psi\colon X \dashrightarrow X
\]
is a birational involution with $\psi\circ\sigma = \tau\circ\psi$. Since
$\psi$ is small, $\sigma$ is an isomorphism in codimension one, so
$\sigma^*$ is an involution of $\Pic(X)$. Since $-K_X = \psi^*M$ with
$M = f^*\OO_{\PP^3}(1)$ and $\tau^*M = M$, we have
$\sigma^*(-K_X) = -K_X$.

We compute $\sigma^*E$ by restricting to the K3 surface $\Sbar$ of
Section~\ref{sec:smallness}. Since $t$ is the section of $-K_X$ defining
$\Sbar$, we have $\Sbar = \varphi^*P$ for the plane
$P = \{x_0 = 0\} \subset \PP^3$. The surface $\Sbar$ is disjoint from the
exceptional locus of $\psi$: if $R'$ is a curve contracted by $\psi$, then
$(-K_X)\cdot R' = 0$, so $R' \not\subset \Sbar$ because
$L := (-K_X)|_{\Sbar}$ is ample by Lemma~\ref{lem:Lampleness}, and then
$\Sbar\cdot R' = 0$ forces $R' \cap \Sbar = \emptyset$. Hence $\sigma$ is an
isomorphism near $\Sbar$, and $\sigma(\Sbar) = \Sbar$ since
$\psi(\Sbar) = f^{-1}(P)$ is $\tau$-invariant. The restriction
$\sigma|_{\Sbar}$ is therefore the covering involution $\iota$ of the
degree-two morphism $p = \varphi|_{\Sbar}\colon \Sbar \to P \simeq \PP^2$
defined by $|L|$.

The transfer argument for the finite quotient
$p\colon \Sbar \to \Sbar/\langle\iota\rangle = \PP^2$ gives
$H^2(\Sbar,\mathbb{Q})^{\iota} \simeq H^2(\PP^2,\mathbb{Q})$, generated by the pullback
$L$ of the hyperplane class. As $\iota^*$ is an involutive isometry of
$\Pic(\Sbar)$, its $(+1)$-eigenspace is $\mathbb{Q} L$ and its $(-1)$-eigenspace
is $L^\perp = \mathbb{Q} v$, where $v = 8h - 5\Gamma$ by Lemma~\ref{lem:Lampleness}.
Thus $\iota^*L = L$ and $\iota^*v = -v$. Since $\Gamma = 4L - v$,
$\iota^*\Gamma = 4L + v = 16h - 9\Gamma$. The restriction map $\Pic(X) \to \Pic(\Sbar)$ is an isomorphism sending $H$ to $h$ and $E$ to $\Gamma$, and $(\sigma^*E)|_{\Sbar} = \iota^*(E|_{\Sbar}) = 16h - 9\Gamma
= \bigl(8(-K_X) - E\bigr)|_{\Sbar}$. Therefore
\begin{equation}\label{eq:sigmaE}
\sigma^*E = 8(-K_X) - E,
\qquad
\sigma^*H = \tfrac12\bigl(-K_X + \sigma^*E\bigr) = 5(-K_X) - H .
\end{equation}

We now show that $\sigma$ is the flop of $\psi$. The divisor $H$ is nef and
$\psi$-ample: a curve contracted by $\psi$ and orthogonal to $H$ would be
orthogonal to both $-K_X$ and $H$, which span $\Pic(X)\otimes\mathbb{Q}$, and
hence numerically trivial. Since $-K_X$ is $\psi$-trivial,
\eqref{eq:sigmaE} gives $(\sigma^*H).R' = -H.R' < 0$ for every
curve $R'$ contracted by $\psi$, equivalently by $\tau\circ\psi$. Thus $X$,
regarded as an $X'$-scheme via $\tau\circ\psi$, is a small projective
modification of $X$ over $X'$ on which the strict transform
$\sigma_*H = \sigma^*H$ of the $\psi$-ample divisor $H$ is relatively
anti-ample. This is the defining property of the flop of $\psi$, which is
unique, namely $X^+ = \Proj_{X'}\bigoplus_{m\ge 0}\psi_*\OO_X(-mH)$; see
\cite[\S6.1]{KM98} or \cite{Kollar89}. Hence there is an isomorphism
$g\colon X^+ \to X$ with $\tau\circ\psi\circ g = \psi^+$ and
$g\circ\chi = \sigma$.

Under $g$ the nef cone of $X^+$ is identified with
$\Nef(X) = \langle -K_X, H\rangle$, so the second extremal contraction
$\pi^+\colon X^+ \to Y^+$ is $\pi\circ g$ up to an isomorphism
$Y^+ \simeq Y$. Identifying $Y^+$ with $Y$ in this way,
\[
\Phi = \pi^+\circ\chi\circ\pi^{-1} = \pi\circ\sigma\circ\pi^{-1},
\]
and $\sigma^2 = \mathrm{id}$ gives $\Phi^2 = \mathrm{id}_Y$.
\end{proof}

%%%%%%%%%%%%%%%%%%%%%%%%%%%%%%%%%%%%%%%%%%%%%%%%%%%%%%%%%%%%%%%%

\begin{corollary}\label{cor:invariants}
The link $\Phi$ has the numerical invariants of E1-E1 Case~28: the
contraction $\pi^+$ is of type E1 with
\[
(-K_{Y^+})^3 = 16, \qquad (r^+,d^+,g^+) = (2,3,0),
\]
and
\[
\alpha = 8, \qquad \beta = -1, \qquad e = 552, \qquad \frac{e}{r^3} = 69 .
\]
\end{corollary}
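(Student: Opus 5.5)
The plan is to read everything off Theorem~\ref{thm:involution}. Since $g\colon X^+\to X$ is an isomorphism with $\pi^+=\pi\circ g$ (after identifying $Y^+$ with $Y$), the contraction $\pi^+$ is the blow-up of $Y=V_2$ along the smooth rational cubic $C$. This is of type E1 with $(-K_{Y^+})^3=16$ by \eqref{PicY}, and with $(r^+,d^+,g^+)=(2,3,0)$ by \eqref{eq:Cdata}. Its exceptional divisor is $E^+=g^*E$.

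Next I will find the strict transform $D=\chi^{-1}_*E^+$ on $X$. From $g\circ\chi=\sigma$ we get $D=\sigma^{-1}_*E=\sigma^*E$, because $\sigma$ is an involution that is an isomorphism in codimension one. By \eqref{eq:sigmaE},
\[
D=\sigma^*E=8(-K_X)-E ,
\]
so $\alpha=8$ and $\beta=-1$.

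For the defect, the strict transform $\widetilde E$ of $E$ on $X^+$ corresponds under $g$ to $\sigma_*E=\sigma^*E=8(-K_X)-E$ on $X$. Hence
\[
\widetilde E^3=(g_*\widetilde E)^3=(8(-K_X)-E)^3 .
\]
I expand this with the numbers \eqref{eq:-K_X3}--\eqref{eq:-K_XE2}:
\[
512\cdot 2-3\cdot 64\cdot 8+3\cdot 8\cdot(-2)-(-4)=1024-1536-48+4=-556 .
\]
Since $E^3=-4$, this gives $e=E^3-\widetilde E^3=-4+556=552$, and $e/r^3=552/8=69$. This matches \eqref{eq:table-defect}.

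The main obstacle is conventions rather than computation. The table's $D$ is a strict transform under $\chi^{-1}$, and $\widetilde E$ is a strict transform under $\chi$; I must check that both are correctly computed as $\sigma^*E$ under $g$, and that the roles of $g$ and $g^{-1}$ are not confused. This is fine because $\sigma=\sigma^{-1}$ and $\sigma$ is an isomorphism in codimension one, so pushforward and pullback of divisors agree. I will also note that the sign of $\beta$ and the value of $e$ agree with the general formula of \cite{CM13} as a consistency check.
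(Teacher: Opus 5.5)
Your proposal is correct and follows the paper's proof essentially step for step: you read off $\pi^+=\pi\circ g$ from Theorem~\ref{thm:involution}, identify both $D$ and $g_*\widetilde E$ with $\sigma^*E=8(-K_X)-E$ using $g\circ\chi=\sigma$, and expand $(8(-K_X)-E)^3=-556$ to get $e=552$ and $e/r^3=69$. Your remark that $\sigma_*=\sigma^*$ on divisors, because $\sigma=\sigma^{-1}$ is an isomorphism in codimension one, is the same point the paper relies on when it writes $\sigma_*E=\sigma^*E$.
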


\begin{proof}
By Theorem~\ref{thm:involution}, $\pi^+ = \pi\circ g$ under the
identification $Y^+ \simeq Y$, so $\pi^+$ is the blow-up of the smooth
rational cubic curve $C \subset Y = V_2$. Hence $\pi^+$ is of type E1,
$(-K_{Y^+})^3 = (-K_Y)^3 = 16$ by \eqref{PicY}, and
$(r^+,d^+,g^+) = (r,d,g) = (2,3,0)$ by \eqref{PicY} and \eqref{eq:Cdata}.

The exceptional divisor of $\pi^+ = \pi\circ g$ is $E^+ = g^*E$. Since
$\chi$ and $g$ are isomorphisms in codimension one, the strict transform of
$E^+$ on $X$ under $\chi^{-1}$ is
\[
D = \chi^*E^+ = \chi^*g^*E = (g\circ\chi)^*E = \sigma^*E = 8(-K_X) - E
\]
by \eqref{eq:sigmaE}. Thus $\alpha = 8$ and $\beta = -1$.

Finally, the strict transform $\Et = \chi_*E$ of $E$ on $X^+$ corresponds
under the isomorphism $g$ to $g_*\Et = (g\circ\chi)_*E = \sigma_*E =
\sigma^*E = 8(-K_X) - E$, and triple intersection numbers are preserved by
the isomorphism $g$. Using \eqref{eq:-K_X3}-\eqref{eq:-K_XE2} and $E^3 = -4$ from \eqref{eq:HEtable},
\begin{align*}
\Et^3 &= \bigl(8(-K_X) - E\bigr)^3 \\
&= 512(-K_X)^3 - 192(-K_X)^2. E + 24(-K_X).E^2 - E^3 \\
&= 512\cdot 2 - 192\cdot 8 + 24\cdot(-2) - (-4) = -556 .
\end{align*}
Hence $e = E^3 - \Et^3 = -4 - (-556) = 552$, and since $r = 2$,
$e/r^3 = 552/8 = 69$.
\end{proof}

\begin{proof}[Proof of Theorem~\ref{thm:main}]
By Propositions~\ref{prop:weakfano} and \ref{prop:small}, $X = \Bl_C(Y)$ is a smooth weak Fano threefold of Picard number two with small anticanonical morphism, and $\pi\colon X \to Y = V_2$ is the blow-up of a smooth rational cubic, so $(-K_Y)^3 = 16$ and $(r,d,g) = (2,3,0)$. By
Theorem~\ref{thm:involution} and Corollary~\ref{cor:invariants}, the flop
$\chi$ exists, the contraction on the other side is again the blow-up of a
smooth rational cubic curve on $V_2$, and the invariants of the link are those in \eqref{eq:case28data}-\eqref{eq:table-defect}. These are precisely the invariants of Case~28.
\end{proof}
 
\begin{remark}\label{rem:tables}
The invariants in Corollary~\ref{cor:invariants} are obtained directly from the geometry of the involution $\sigma$, without consulting the
classification tables. They are consistent with those tables: none of the
cases classified in \cite{JPR07}, nor the non-E1-E1 cases of \cite{CM13},
has $(-K_X)^3 = 2$, $(-K_Y)^3 = 16$, $(r,d,g) = (2,3,0)$, and Case~28 is
the unique row of the E1-E1 table of \cite{CM25} with these left-hand
invariants. The value $\beta = -1$ also agrees with the general relation
$\beta = -r^+/r$ for E1-E1 links \cite[Lemma~2.2]{CM13}, and the value
$\alpha = 8$ with the relation $(-K_X)^2.D = (-K_{X^+})^2.E^+ = 8$, which holds because intersection numbers involving at least one factor of the anticanonical divisor are preserved across the $K$-trivial flop.
\end{remark}

\section{Concluding Remarks}

The methods of \cite{ACM17} use projective models of rank-one Fano
threefolds as complete intersections or linear sections in homogeneous
varieties. As noted in \cite{CM25}, these methods do not apply directly
to Case 28 because the degree-two del Pezzo threefold $V_2$ is naturally
realized as a quartic double solid. The construction above uses this
double-cover model directly. In particular, the splitting $q^{-1}(\Gamma)=C\cup C^- $ provides both the center of the blow-up and a $K_X$-trivial curve on the resulting threefold, and the covering involution of the anticanonical double cover $X' \to \PP^3$ provides the flop.\\

The E1-E1 table in \cite{CM25} lists Case 28 as open. Theorem~\ref{thm:main} proves that this case is geometrically realizable. Thus, among the four E1-E1 cases left open in \cite{CM25}, the remaining cases are 59, 61, and 80.
\newpage

\section*{Acknowledgments}
The authors express their gratitude to V.V. Shokurov for his lifelong mentorship and guidance, and to V. Cheltsov for proposing this alternative approach to the construction. Special thanks are also due to the hosts of the SAGA$^2$ Conference at the University of French Polynesia for organizing a highly successful and welcoming event.

\end{document}